\newtheorem{thm}{Theorem}
\newtheorem{lem}[thm]{Lemma}
\newtheorem{exa}[thm]{Example}
\newcommand{\C}{\mathds{C}}
\newcommand{\R}{\mathds{R}}
\newcommand{\N}{\mathds{N}}
\newcommand{\cE}{\mathcal{E}}
\newcommand{\cH}{\mathcal{H}}
\newcommand{\cD}{\mathcal{D}}
\newcommand{\cK}{\mathcal{K}}
\newcommand{\ux}{\underline{x}}
\newcommand{\ii}{\mathrm{i}}
\newcommand{\ov}{\overline}
\newcommand{\uy}{\underline{y}}
\author{Konrad Schm\"udgen}
\address{Universit\"at Leipzig, Mathematisches Institut, Augustusplatz 10/11, D-04109 Leipzig, Germany}
\email{schmuedgen@math.uni-leipzig.de}
\date{}
\begin{document}

\maketitle

\begin{abstract}
We prove a solvability theorem for the Stieltjes problem on $\R^d$ which is based on the multivariate Stieltjes condition 
$\sum_{n=1}^\infty L(x_j^{n})^{-1/(2n)} {=}+\infty$, $j=1,\dots,d.$ 
This result is applied to derive a new solvability theorem for the moment problem on unbounded semi-algebraic subsets of $\R^d$.
\end{abstract}
\textbf{AMS  Subject  Classification (2010)}.
 44A60, 47B25, 14P10.\\

\textbf{Key  words:} moment problem, Stieltjes vector,  semi-algebraic set

\maketitle

\section {Introduction}

It is well known that the moment problem on unbounded semi-algebraic sets of $\R^d$ leads to new and principal difficulties, concerning the existence and the uniqueness of solutions. A rather general existence theorem is the fibre theorem (\cite{sch2015}, see also \cite[Chapter 4]{marshall1} or \cite[Chapter 13]{sch2017}). Most of the general existence results that are based on positivity conditions 
can be derived from this theorem. The fibre theorem requires the existence of 
``sufficiently many" bounded polynomials on the semi-algebraic set.

 However, for many  unbounded sets there are no bounded polynomials except from the constants and  positivity on the corresponding preorderings is not enough to ensure the existence of solutions. For such sets additional assumptions such as growth conditions on the moments  are required.
The most famous
 and powerful result in this respect is  Nussbaum's theorem \cite{nussbaum1}. It says that if $L$ is a linear functional on $\R[x_1,\dots,x_d]$ such that $L(p^2)\geq 0$ for $p\in \R[x_1,\dots,x_d]$ and satisfying the {\it Carleman condition}
\begin{align}\label{carle}
\sum_{n=1}^\infty L(x_j^{2n})^{-1/(2n)} =+ \infty,~~~j=1,\dots,d,
\end{align}
then $L$ is a moment functional with a unique representing measure on $\R^d$. 

The aim of this paper is to prove two theorems on the multidimensional moment problem  that are based on another growth condition (\ref{ass2}), called {\it Stieltjes condition}.  Comparing the powers of $L(x_j^{2n})$ in both cases we see that the Stieltjes condition (\ref{ass2}) is weaker than the Carleman condition  (\ref{carle}). The first main result (Theorem \ref{stielt1}) is the counter-part of Nussbaum's theorem for the Stieltjes moment problem.
The second main result (Theorem \ref{stielt2}) deals with the $\cK$-moment problem for some in general noncompact semi-algebraic set $\cK$.

Let $\cK$ be  a closed subset of $\R^d$. The $\cK$-moment problem asks the following question: {\it When is a real $d$-sequence $s=(s_n)_{n\in \N_0^d}$ a  $\cK$-moment sequence}, that is, when does there exist a Radon measure $\mu$ supported on $\cK$ such that 
\begin{align}\label{moments}
s_n=\int_\cK~ x^n\, d\mu(x)~~ \textrm{for all} ~~n\in \N_0^d?
\end{align}
(Here we tacitly mean that the corresponding integral is finite and equal to $s_n$ and we use the multi-index notation $x^n=x_1^{n_1}\cdots x_d^{n_d}$ for $n=(n_1,\dots,n_d)\in \N_0^d$.)

There is a one-to-one correspondence between real $d$-sequences $s=(s_n)_{n\in \N_0^d}$ and real-valued  linear functionals on the polynomial algebra $\R[x_1,\dots,x_d]$ given by $L_s(x^n)=s_n, n\in \N_0^d$. The functional $L_s$  is  called the Riesz functional  associated with $s$. Then (\ref{moments}) is equivalent to
\begin{align}\label{momentf}
L_s(p)=\int_\cK~ p(x)\, d\mu(x)~~~ \textrm{for all} ~~p\in \R[x_1,\dots,x_d].
\end{align}
A linear functional on $\R[x_1,\dots,x_d]$ of this form is called a {\it $\cK$-moment functional}. In the special case $\cK=\R_+^d$, where  $\R_+:=[0,+\infty)$, we call such a functional {\it Stieltjes moment functional}. If there is no support requirement for the measure, that is, if $\cK=\R^d$, the functional is called a {\it moment functional}. The functional-analytic approach to the moment problem uses the integral representation (\ref{momentf}) of functionals rather than the representation (\ref{moments}) of moments.
 
 \section{A theorem for the Stieltjes moment problem on $\R^d$}
 
 In the case $d=1$,
 a  necessary and sufficient condition for being a Stieltjes moment sequence is that the sequences $(s_n)_{n\in \N_0}$ and $(s_{n+1})_{n\in \N_0}$ are positive semi-definite, or equivalently, 
 \begin{align}\label{stid=1}
 L_s(p^2)\geq 0~~\textrm{and}~~ L_s(xp^2)\geq 0~~ \textrm{for}~ p\in \R[x].
 \end{align} In dimensions $d\geq 2$, the Stieltjes moment problem is much more subtle and there is no such simple solvability criterion. The counter-part of (\ref{stid=1}) are the conditions in (\ref{ass1}). However, (\ref{ass1}) is necessary, but it is not sufficient for being a Stieltjes moment functional. The reason for the latter is that there are nonnegative polynomials on $\R_+$ which are not of the form form $p_1+xp_2$, with $p_1, p_2$ sums of squares. 
 
 The following theorem says that the positivity assumption (\ref{ass1}) {\it and}  the Stieltjes condition (\ref{ass2}) imply the existence and  uniqueness of a solution of the Stieltjes  moment problem.

\begin{thm}\label{stielt1}
Let $L$ be a linear functional on $\R[x_1,\dots,x_d]$ such that 
\begin{align}\label{ass1}
L(p^2)\geq 0~~~ \textit{and}~~~ L(x_jp^2)\geq 0 ~~\textit{for}~~ p\in\R[x_1,\dots,x_d],~ j=1,\dots,d.
\end{align}
Suppose that
\begin{align}\label{ass2}
\sum_{n=1}^\infty L(x_j^{n})^{-1/(2n)} =+ \infty~~~ \textrm{for}~~j=1,\dots,d.
\end{align}
Then $L$ is Stieltjes moment functional on $\R^d$ which has a unique representing measure.
\end{thm}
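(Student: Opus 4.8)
The plan is to attach a Hilbert space to $L$ by a Gelfand--Naimark--Segal construction and to reduce the statement to a self-adjointness/commutativity result for non-negative symmetric operators possessing a common cyclic Stieltjes vector.

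First I would extend $L$ to a $\C$-linear functional on $\C[x_1,\dots,x_d]$ and equip the latter with the Hermitian form $\langle p,q\rangle:=L(p\,\overline q)$; by the first inequality in (\ref{ass1}), applied to real and imaginary parts, this form is non-negative, and by the Cauchy--Schwarz inequality its radical $\cN=\{p:L(|p|^2)=0\}$ is invariant under multiplication by each $x_j$ (because $L(p\,\overline{x_j^2p})=L(|x_jp|^2)$ vanishes when $L(|p|^2)=0$). Passing to the quotient and completing produces a Hilbert space $\Hil$ with dense subspace $\cD$, the image of the polynomials, on which the operators $X_j\widehat p:=\widehat{x_jp}$ are well defined, symmetric, and leave $\cD$ invariant, and $X_1,\dots,X_d$ commute pairwise on $\cD$. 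Writing $|p|^2$ as a sum of two real squares, the second family of inequalities in (\ref{ass1}) gives $\langle X_j\widehat p,\widehat p\rangle=L(x_j|p|^2)\ge0$, so each $X_j$ is non-negative. Finally $\varphi:=\widehat 1$ is cyclic, $\cD$ is the linear span of the vectors $X_1^{n_1}\cdots X_d^{n_d}\varphi$, and $\langle X_1^{n_1}\cdots X_d^{n_d}\varphi,\varphi\rangle=L(x^n)$ for $n\in\N_0^d$; in particular $\langle X_j^n\varphi,\varphi\rangle=L(x_j^n)$, so (\ref{ass2}) says exactly that $\varphi$ is a Stieltjes vector for each of the non-negative symmetric operators $X_1,\dots,X_d$.

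Next I would invoke the Stieltjes analogue of the operator-theoretic ingredient behind Nussbaum's theorem: a pairwise commuting family $X_1,\dots,X_d$ of non-negative symmetric operators on a common invariant dense domain that admits a cyclic common Stieltjes vector has non-negative self-adjoint extensions $A_1,\dots,A_d$ which \emph{strongly} commute, and such a strongly commuting non-negative self-adjoint extension is unique. Granting this, let $E$ be the joint spectral measure of the tuple $(A_1,\dots,A_d)$; since each $A_j\ge0$, $E$ is carried by $\R_+^d$, so $\mu:=\langle E(\cdot)\varphi,\varphi\rangle$ is a Radon measure on $\R_+^d$. Because $\varphi\in\cD$, $\cD$ is $X_j$-invariant and $A_j$ extends $X_j|_{\cD}$, one has $A_1^{n_1}\cdots A_d^{n_d}\varphi=\widehat{x^n}$, whence $\int_{\R_+^d}x^n\,d\mu=\langle A_1^{n_1}\cdots A_d^{n_d}\varphi,\varphi\rangle=L(x^n)$ for all $n\in\N_0^d$, i.e.\ $L$ is a Stieltjes moment functional. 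For uniqueness, any representing measure $\nu$ on $\R_+^d$ yields, through the multiplication operators by $x_j$ on $L^2(\nu)$ and the cyclic vector $1$, a strongly commuting tuple of non-negative self-adjoint extensions of $X_1,\dots,X_d$, so by the uniqueness just invoked this tuple, hence $\nu$, coincides with $\mu$.

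The heart of the argument, and the only place (\ref{ass2}) really enters, is the strongly commuting extension in the invoked step: commutation of $X_1,\dots,X_d$ on $\cD$ by itself does not force the resolvents of non-negative self-adjoint extensions to commute. For $d=1$ this is the classical fact that a Stieltjes moment sequence with $\sum_n s_n^{-1/(2n)}=+\infty$ is determinate in the Stieltjes sense, but the multivariate situation is genuinely harder. One cannot dispose of it by symmetrising --- introducing variables $t_j$ with $x_j=t_j^2$ and feeding the functional $M(t^{2n}):=L(x^n)$ ($M$ zero on all other monomials) into the multivariate Nussbaum theorem --- because $M(q^2)\ge0$ would need $L$ to be non-negative on the whole preordering generated by $x_1,\dots,x_d$, which (\ref{ass1}) does not provide once $d\ge2$. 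Equivalently, (\ref{ass2}) says that $\varphi$ is a quasi-analytic vector for the square root of the Friedrichs extension of each $X_j$, and the real task is to upgrade this --- quasi-analyticity for the individual square roots --- to joint strong commutativity of the Friedrichs extensions themselves; this is the Stieltjes counterpart of the commutativity step in the proof of Nussbaum's theorem.
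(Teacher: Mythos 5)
There is a genuine gap, and it sits exactly where you say the heart of the argument lies. Your GNS setup coincides with the paper's, but the ``Stieltjes analogue'' you then invoke --- that a pairwise commuting family of nonnegative symmetric operators with a common cyclic Stieltjes vector admits a (unique) \emph{strongly} commuting family of nonnegative self-adjoint extensions --- is not an available off-the-shelf theorem; it is essentially Theorem \ref{stielt1} itself in operator form, and your closing paragraph concedes that you do not prove it. What is actually known (Nussbaum \cite{nussbaum2}, \cite[Theorem 7.15]{sch2012}) is the single-operator statement: a nonnegative symmetric operator with a \emph{dense set} of Stieltjes vectors is essentially self-adjoint. The paper bridges the multivariate gap with Lemma \ref{stielcom}: for any symmetric $S$ on $\cD_L$ leaving $\cD_L$ invariant and commuting with $\pi_L(\R_d[\ux])$, the vector $S\varphi_L$ is again a Stieltjes vector for every $\pi_L(x_j)$. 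This first yields essential self-adjointness of each $\pi_L(x_j)$, so no extension has to be chosen at all ($T_j=\ov{\pi_L(x_j)}$ is already self-adjoint and positive); applied with $S=\pi_L(x_k+1)$ it then shows that $\pi_L(x_j)$ restricted to the dense subspace $(T_k+I)\cD_L$ is still essentially self-adjoint, hence $(T_j+I)(T_k+I)\cD_L$ is dense, the bounded resolvents $(T_j+I)^{-1}$ and $(T_k+I)^{-1}$ commute on it and therefore everywhere, and $T_j$, $T_k$ strongly commute. This resolvent/density mechanism is precisely what converts the Stieltjes condition into joint strong commutativity, and nothing in your outline supplies a substitute for it.

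Two smaller points. First, (\ref{ass2}) is not ``exactly'' the Stieltjes-vector condition for $X_j$: since $\|X_j^n\varphi\|^{-1/(2n)}=L(x_j^{2n})^{-1/(4n)}$, you need divergence of $\sum_n L(x_j^{2n})^{-1/(4n)}$, which follows from (\ref{ass2}) only after normalizing $L(1)=1$ and invoking Lemma \ref{techlem} (H\"older plus monotonicity for the one-dimensional Stieltjes sequence $(L(x_j^n))_n$); the paper in fact needs the case $m=4$ of (\ref{stieltjm}) inside the proof of Lemma \ref{stielcom}. Second, your uniqueness sketch is not sound as stated: a representing measure $\nu$ gives multiplication operators on $L^2(\nu)$, but polynomials need not be dense there, so one does not directly obtain strongly commuting self-adjoint extensions acting on $\cH_L$ to compare with $(T_1,\dots,T_d)$; the paper instead deduces determinacy from the essential self-adjointness of all $\pi_L(x_j)$ via \cite[Theorem 14.2]{sch2017}.
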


Before we discuss this theorem we state a well known technical lemma. For the convenience of the reader we include a proof of the lemma.
\begin{lem}\label{techlem}
Let $s=(s_n)_{n\in \N_0}$ be a real sequence with $s_0=1$. Suppose that $s$ and the shifted sequence  $Es:=(s_{n+1})_{n\in \N_0}$ are positive semidefinite, that is, 
\begin{align*}
\sum_{k,l=0}^n s_{k+l}\xi_k\,\xi_l \geq 0~~ \textrm{and}~~ \sum_{k,l=0}^n s_{k+l+1}\xi_k\,\xi_l \geq 0~~~\textrm{for all} ~~ \xi_0,\xi_1,\dots,\xi_n\in \R, n\in \N_0.
\end{align*}
Let $m\in \N$. Then 	we have	\begin{align}\label{ass5}
\sum_{n=1}^\infty s_{n}^{-1/(2n)}=+ \infty
\end{align} if and only if
\begin{align}\label{ass6}
\sum_{n=1}^\infty s_{nm}^{-1/(2nm)} =+ \infty.
\end{align}	 	
\end{lem}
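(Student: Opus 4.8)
The plan is to exploit the classical Stieltjes moment structure of the sequence $s$: since $s$ and $Es$ are both positive semidefinite and $s_0=1$, there is a Radon probability measure $\nu$ on $[0,\infty)$ (or, if $s$ is not determinate, at least one such measure) with $s_n=\int_0^\infty t^n\, d\nu(t)$ for all $n$. Actually, for the lemma we do not even need a representing measure: the crucial tool is the set of Cauchy--Schwarz-type inequalities for Hankel forms, which give log-convexity of $n\mapsto s_n$. First I would record that $s_{n}^2\le s_{n-1}s_{n+1}$ for all $n\ge 1$, so the sequence $(\log s_n)$ is convex; in particular $s_n^{1/n}$ is nondecreasing and the limit $\ell:=\lim_n s_n^{1/(2n)}\in[0,+\infty]$ exists. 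If $\ell<+\infty$ then \emph{both} series (\ref{ass5}) and (\ref{ass6}) diverge (their terms are bounded below by $\ell^{-1}>0$, or all terms are $+\infty$ if $\ell=0$), so the equivalence is trivial. Hence we may assume $\ell=+\infty$, i.e. $s_n^{1/(2n)}\to\infty$.

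The direction (\ref{ass6})$\Rightarrow$(\ref{ass5}) is the easy one: by log-convexity, $s_{nm}\le s_{nm+1}\le\cdots$ is not quite what I want; rather I use that $t\mapsto \log s_{\lfloor t\rfloor}$ grows at least linearly in a controlled way. Concretely, convexity of $(\log s_n)$ gives, for $nm\le k\le (n+1)m$, the bound $s_k^{1/(2k)}\le \max(s_{nm}^{1/(2nm)},\,s_{(n+1)m}^{1/(2(n+1)m)})\cdot C$ for a harmless constant, so a block of $m$ consecutive terms $s_k^{-1/(2k)}$ in (\ref{ass5}) is bounded below by a constant multiple of $s_{(n+1)m}^{-1/(2(n+1)m)}$; summing the blocks shows (\ref{ass5}) diverges whenever (\ref{ass6}) does. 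For the reverse direction (\ref{ass5})$\Rightarrow$(\ref{ass6}), I would use the standard Carleman-type condensation trick: since $(\log s_n)$ is convex, the sequence $a_n:=s_n^{-1/(2n)}$ is (essentially, up to replacing $s_n$ by its convex minorant, which only helps) decreasing, and for a nonincreasing nonnegative sequence $a_n$ one has the Cauchy condensation equivalence $\sum_n a_n<\infty \iff \sum_n m\,a_{nm}<\infty$; applied here this yields $\sum_n s_n^{-1/(2n)}<\infty \iff \sum_n s_{nm}^{-1/(2nm)}<\infty$, which is the contrapositive of what we want. I expect the author's proof to run along exactly these lines, invoking the Carleman–criterion reformulation (a nonnegative nonincreasing sequence $a_n$ satisfies $\sum a_n=\infty$ iff $\sum a_{nm}=\infty$) together with the Hankel log-convexity $s_n^2\le s_{n-1}s_{n+1}$.

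The main obstacle, and the only genuinely delicate point, is monotonicity: $a_n=s_n^{-1/(2n)}$ need not literally be nonincreasing for an arbitrary positive semidefinite pair $(s,Es)$ — log-convexity of $(s_n)$ guarantees $s_n^{1/n}$ is nondecreasing only after one checks the base cases, and edge effects with small $n$ must be handled. I would deal with this by passing to the sequence of partial Legendre/convex‑hull values or, more simply, by noting $s_{2n}\ge s_n^2$ hence $s_{2n}^{-1/(4n)}\le s_n^{-1/(2n)}$, i.e. $a_{2n}\le a_n$, and then bootstrapping from powers of $2$ to the general comparison $a_{nm}\le a_{n'm}$ for $n'\le n$ up to a bounded factor via convexity. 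Once monotonicity (up to constants) is in hand, the condensation argument is routine and symmetric, giving both implications at once.
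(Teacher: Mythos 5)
Your plan is correct, and its skeleton --- establish that $a_n:=s_n^{-1/(2n)}$ is nonincreasing, then compare the full series with the subseries over multiples of $m$ by grouping blocks of $m$ consecutive terms --- is exactly the skeleton of the paper's proof. The genuine difference is how the monotonicity is obtained: the paper invokes the one-dimensional Stieltjes theorem (this is precisely where positive semidefiniteness of both $s$ and $Es$ enters) to produce a representing probability measure $\mu$ on $\R_+$ (here $s_0=1$ is used), and then H\"older's inequality with exponents $p=l/k$, $q=l/(l-k)$ gives $s_k^{1/k}\le s_l^{1/l}$ for $0<k<l$ exactly, with no constants; you propose a measure-free route via the Hankel Cauchy--Schwarz inequalities, i.e.\ log-convexity of $(s_n)$, plus $s_0=1$. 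Your route works and is in a sense more elementary (no existence theorem for the moment problem is needed), but two remarks: first, your hedging about monotonicity ``up to constants'' is unnecessary --- convexity of $(\log s_n)$ together with $\log s_0=0$ gives at once that $(\log s_n)/n$ is nondecreasing, hence $a_n$ is genuinely nonincreasing, provided you note that $s_n^2\le s_{n-1}s_{n+1}$ requires the \emph{shifted} Hankel form for even $n$ (the unshifted form only yields it for odd $n$), which is exactly why the hypothesis on $Es$ is indispensable in either argument; second, the direction (\ref{ass6})$\Rightarrow$(\ref{ass5}) needs no convexity at all, since the terms of (\ref{ass6}) form a subsequence of the nonnegative terms of (\ref{ass5}) --- this is how the paper disposes of it in one line, whereas your block argument there is superfluous. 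Finally, both your plan and the paper tacitly assume $s_n>0$; the degenerate case where some $s_n=0$ (so that terms are $+\infty$ and both series diverge) should simply be set aside at the outset.
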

\begin{proof} Clearly, all summands are nonnegative. Hence, since $(s_{nm}^{-1/(2nm)})_{n\in \N}$ is a subsequence of  $(s_n^{-1/(2n)})_{n\in \N}$, it is trivial that (\ref{ass6}) implies (\ref{ass5}).

We prove the converse. Because the sequences $s$ and $Es$ are positive semidefinite, Stieltjes theorem (see e.g. \cite[Theorem 3.12]{sch2017}) implies that $s$ is a Stieltjes moment sequence, that is, there is a Radon measure on $\R_+$ such that $s_n=\int_0^\infty x^n d\mu(x)$ for $n\in \N_0$. Clearly, $\mu(\R_+)=s_0=1$. Suppose $0<k<l$. The H\"older inequality, applied with $p=\frac{l}{k}, q=\frac{l}{l-k}$, yields
\begin{align*}
s_k=\int_0^\infty x^k \cdot 1\, d\mu\leq \Big(\int_0^\infty (x^k)^p d\mu\Big)^{1/p} \Big(\int_0^\infty 1\, d\mu\Big)^{1/q} =\Big(\int_0^\infty x^l\, d\mu\Big)^{k/l} s_0^{1/q}= s_l^{k/l},
\end{align*}
so that $s_k^{1/k}\leq s_l^{1/l}$. Hence the sequence $(s_k^{-1/k})_{k\in \N}$ is nonincreasing and therefore
\begin{align*}
\sum_{n=m}^\infty  s_n^{-1/(2n)}=\sum_{l=0}^{m-1}\sum_{n=1}^\infty s_{nm+l}^{-1/(2(nm+l))}\leq m \sum_{n=1}^\infty  s_{nm}^{-1/(2nm)}.
\end{align*}
Thus, if the sum in (\ref{ass6}) is finite, so the sum in (\ref{ass5}), that is,  (\ref{ass5}) implies (\ref{ass6}).
\end{proof}

Now suppose $L$ is as in Theorem \ref{stielt1}. If $L(1)=0$, it follows from the Cauchy-Schwarz inequality that $L=0 $, so the assertion is trivial. Otherwise,  $L(1)>0$. Then, setting $L'=L(1)^{-1}L$, $L'$ satisfies the assumptions (\ref{ass1}) and (\ref{ass2}) and in addition $L'(1)=1$.  (Condition  (\ref{ass2}) for $L'$  follows  from the fact that $\lim_n L(1)^{1/(2n)}=1$).) 

Thus we can assume without loss of generality in Theorem \ref{stielt1} that $L(1)=1$. Then, for $j=1,\dots,d$, the sequence $(s_n:=L(x_j^n))_{n\in \N_0}$ satisfies the assumptions of Lemma \ref{techlem}. Therefore,  by this lemma condition (\ref{ass2}) is equivalent to
\begin{align}\label{stieltjm}
\sum_{n=1}^\infty L(x_j^{mn})^{-1/(2nm)} =+ \infty ~~~\textrm{for one (then for all)}~~ m\in \N.
\end{align}
The case $m=4$ is used in the proof of Theorem \ref{stielt1} in Section \ref{section2} below.

The proof of Theorem \ref{stielt1} given in Section \ref{section2} is essentially based on the operator-theoretic approach to the  moment problem, see \cite[Section 12.5]{sch2017} or \cite{vasilescu}.
While Carleman's condition  in  Nussbaum's theorem leads to quasi-analytic vectors \cite{nussbaum1}, Theorem \ref{stielt1} uses  Stieltjes vectors, see  (\ref{defstv}) for the definition of a Stieltjes vector. The crucial operator-theoretic result is the following: If a nonnegative symmetric operator  has a dense set of Stieltjes vectors, then it is essentially self-adjoint. This theorem was first proved by Nussbaum \cite{nussbaum2} and a  later independently by Masson and McClary \cite{masson}, see also \cite[Theorem 7.15]{sch2012}. 

The Carleman condition goes back to T. Carleman's classical work \cite{carleman}. It is now a standard tool in the theory of the classical moment problem  and used in many papers such as \cite{berg}, \cite{nussbaum1}, \cite{stochel}, \cite{marshall2}, \cite{jeu}, \cite{lass}. The multidimensional classical moment problem on $\R^d$ and its interplay with real algebraic geometry are treated in \cite{sch2017} and \cite{marshall1}.
\section{A  theorem on the $\cK$-moment problem for semi-algebraic sets}

Theorem \ref{stielt1} is essentially used to derive the following result concerning the $\cK$-moment problem for  semi-algebraic sets.

\begin{thm}\label{stielt2}
Let\, ${\sf f}=\{f_1,\dots,f_m\}$ be the finite set of polynomials $f_j\in \R[x_1,\dots,x_d]$, $d\in \N$, which generate the polynomial algebra $\R[x_1,\dots,x_d]$. Suppose that $L$ is a linear functional on $\R[x_1,\dots,x_d]$ satisfying 
\begin{align}\label{ass3}
L(p^2)\geq 0~~~ \textit{and}~~~ L(f_jp^2)\geq 0~~\textit{for}~~ p\in \R[x_1,\dots,x_d],\, j=1,\dots,m,
\end{align}
and
\begin{align}\label{ass4}
\sum_{n=1}^\infty L(f_j^{n})^{-1/(2n)} =+ \infty~~~ \textrm{for}~~ j=1,\dots,m.
\end{align}
Then $L$ is  moment functional. It has a unique representing measure. This measure is supported on the semi-algebraic set 
\begin{align}\label{defkf}
\cK({\sf f}):=\{ x\in \R^d: f_1(x)\geq 0,\dots, f_k(x)\geq 0\}.
\end{align}
\end{thm}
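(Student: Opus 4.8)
The plan is to reduce Theorem \ref{stielt2} to Theorem \ref{stielt1} by making a change of variables via the generators. Since $f_1,\dots,f_m$ generate $\R[x_1,\dots,x_d]$ as an algebra, there is a surjective algebra homomorphism $\pi\colon \R[y_1,\dots,y_m]\to\R[x_1,\dots,x_d]$ sending $y_j\mapsto f_j$. Pulling $L$ back through $\pi$, I would define a linear functional $\widetilde L$ on $\R[y_1,\dots,y_m]$ by $\widetilde L(q):=L(\pi(q))=L(q(f_1,\dots,f_m))$. The positivity hypothesis (\ref{ass3}) says precisely that $\widetilde L(q^2)\geq 0$ and $\widetilde L(y_j q^2)\geq 0$ for all $q\in\R[y_1,\dots,y_m]$ and $j=1,\dots,m$, so $\widetilde L$ satisfies (\ref{ass1}); and the growth hypothesis (\ref{ass4}) is exactly the condition that $\sum_n \widetilde L(y_j^{\,n})^{-1/(2n)}=+\infty$, i.e.\ (\ref{ass2}). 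Hence Theorem \ref{stielt1} applies to $\widetilde L$ on $\R^m$.

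The conclusion from Theorem \ref{stielt1} is that $\widetilde L$ is a Stieltjes moment functional with a unique representing measure $\nu$ supported on $\R_+^m$; that is, $\widetilde L(q)=\int_{\R_+^m} q(y)\,d\nu(y)$ for all $q\in\R[y_1,\dots,y_m]$. The next step is to push $\nu$ forward to $\R^d$. The natural candidate is the polynomial map $F\colon\R^d\to\R^m$, $F(x)=(f_1(x),\dots,f_m(x))$, but $\nu$ lives on the target side, so I need to realize $\nu$ as the image of a measure on $\R^d$. This is where the fact that ${\sf f}$ generates the \emph{whole} polynomial algebra is essential: the coordinate functions $x_1,\dots,x_d$ are themselves polynomials in $f_1,\dots,f_m$, say $x_i=g_i(f_1,\dots,f_m)$ with $g_i\in\R[y_1,\dots,y_m]$, so the map $G\colon\R^m\to\R^d$, $G(y)=(g_1(y),\dots,g_m(y))$, satisfies $G\circ F=\mathrm{id}_{\R^d}$. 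Thus $F$ is injective with polynomial left inverse $G$, and $F$ maps $\R^d$ homeomorphically onto the (Zariski-closed, hence closed) subset $V:=F(\R^d)\subseteq\R^m$. I would then argue that $\nu$ is concentrated on $V$: for any polynomial $q$ vanishing on $V$ we have $q\circ F\equiv 0$ on $\R^d$, hence $\widetilde L(q)=L(q\circ F)=0$, and a standard argument (using that $\nu$ is a positive measure, applying this to $q^2$ and to $q^2$ times the generators, or invoking that the support of a moment functional's representing measure lies in the zero set of everything the functional annihilates among squares) shows $\nu(\R^m\setminus V)=0$. Therefore $\mu:=G_*\nu$ is a well-defined Radon measure on $\R^d$, and for every $p\in\R[x_1,\dots,x_d]$,
\begin{align*}
\int_{\R^d} p\,d\mu=\int_{\R^m}(p\circ G)\,d\nu=\widetilde L(p\circ G)=L\big((p\circ G)\circ F\big)=L(p),
\end{align*}
using $G\circ F=\mathrm{id}$ in the last step; so $L$ is a moment functional with representing measure $\mu$.

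For the support statement: $\nu$ is supported on $\R_+^m\cap V$, and since $V=F(\R^d)$ and $G|_V=F^{-1}$, the image $G(\R_+^m\cap V)$ equals $\{x\in\R^d : f_j(x)=y_j\geq 0 \text{ for all } j\}=\cK({\sf f})$ (with all $m$ inequalities, which in particular gives the displayed set using the first $k$ of them). Finally, uniqueness of $\mu$ follows from uniqueness of $\nu$ in Theorem \ref{stielt1}: any representing measure $\mu'$ for $L$ on $\R^d$ pushes forward under $F$ to a representing measure for $\widetilde L$ on $\R^m$ (one must check $F_*\mu'$ has the right moments, which holds because $(y^n\circ F)$ is the polynomial $f^n$ in the $x$'s), hence $F_*\mu'=\nu$, and then $\mu'=G_*F_*\mu'=G_*\nu=\mu$ since $\mu'$ is concentrated on $F(\R^d)$ where $G\circ F=\mathrm{id}$.

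The main obstacle I anticipate is the measure-theoretic bookkeeping around pushing forward and pulling back: verifying that $\nu$ is genuinely concentrated on the image variety $V=F(\R^d)$ (so that $G_*\nu$ ``undoes'' $F_*$), and checking integrability of $p\circ G$ against $\nu$ for arbitrary $p$ — though the latter is automatic since $\nu$ is a moment measure for $\widetilde L$ and $p\circ G$ is a polynomial. The algebraic input (existence of $G$ with $G\circ F=\mathrm{id}$) is immediate from the generation hypothesis, and the positivity and growth conditions transfer verbatim, so the real content is entirely in Theorem \ref{stielt1} plus this transport argument.
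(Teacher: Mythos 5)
Your proposal is correct and follows essentially the same route as the paper: pass to $\tilde L:=L\circ\theta$ on $\R[y_1,\dots,y_m]$ via the substitution $y_j\mapsto f_j$, apply Theorem \ref{stielt1} to obtain the measure $\nu$ on $\R_+^m$, and transport $\nu$ back through the map $x\mapsto(f_1(x),\dots,f_m(x))$ to get a representing measure supported on $\cK({\sf f})$, with uniqueness inherited from the determinacy of $\tilde L$. Your additional care --- constructing the polynomial left inverse $G$ from the generation hypothesis and verifying that $\nu$ is concentrated on the image variety $V=F(\R^d)$ --- is precisely what legitimizes the paper's terse ``pull-back of $\nu$ along $\tau$'', so it is an elaboration of the same argument rather than a different one.
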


\smallskip

In Theorem  \ref{stielt2} we assumed that the polynomials $f_1,\dots,f_m$ generate the whole polynomial algebra $\R[x_1,\dots,x_d]$.
Without this assumption the proof of   Theorem  \ref{stielt2}  in Section \ref{section2} remains valid and shows that
$L(q)=\int_{\cK({\sf f})} q(\lambda)\, d\mu(\lambda)$ for all polynomials $q$ in the subalgebra generated by $f_1,\dots,f_m$.

The power and usefulness	of Theorem \ref{stielt2} is nicely illustrated by the following example.
	\begin{exa}
Suppose that $d=2$ and $m=2$. We fix a number $k\in \N$ and define $f_1(x)=x_2-x_1^k ,f_2(x)=x_1$. Obviously, the polynomials $f_1, f_2$ generate the polynomial algebra $\R[x_1,x_2]$. The corresponding semi-algebra set $\cK({\sf f})$ is the part of the plane above the curve $x_2=x_1^k$ lying in the positive octant. In the case $k=1$ we get the cone in the positive octant bounded by the $x_2$-axis and the line $x_2=x_1$.

In this case, assumptions (\ref{ass1}) and (\ref{ass2}) take the following  form: 
\begin{align}\label{as1}
L(p^2)& \geq 0,~ L((x_2-x_1^k)p^2)\geq 0, ~ L(x_1p^2)\geq 0 ~~~\textrm{for all}~~ p\in \R[x_1,x_2],\\																									
\sum_{n=1}^\infty~ &L((x_2-x_1^k)^n)^{-1/(2n)}	=+\infty ,~~~		\sum_{n=1}^\infty L(x_1^n)^{-1/(2n)}	=+\infty. \label{as2} 		
\end{align}																																																																						Then Theorem \ref{stielt2} states that each  linear functional $L$ on $	\R[x_1,x_2]$ satisfying (\ref{as1}) and (\ref{as2}) is a moment functional with a unique representing measure and this measure is supported on $\cK({\sf f})$.

Let us try  to treat the moment problem for this semi-algebraic set $\cK({\sf f})$ by other methods. It is easily verified that the constant polynomials are the only polynomials that are bounded on the set  $\cK({\sf f})$. Hence the fibre theorem \cite{sch2015} does not apply to the set $\cK({\sf f})$. 

Another possibility is to assume in addition to (\ref{as1}) the multivariate Carleman condition (\ref{carle}), that is,
\begin{align}\label{as3}																									
\sum_{n=1}^\infty~ &L(x_1^{2n})^{-1/(2n)}	=+\infty ~~\textrm{and}~~		\sum_{n=1}^\infty L(x_2^{2n})^{-1/(2n)}	=+\infty. 		
\end{align}	
Then $L$ is a moment functional by Nussbaum's theorem with a unique representing measure and by Lasserre's localization theorem (\cite{lass}, see also \cite[Theorem 14.25]{sch2017}) this measure is supported on  $\cK({\sf f})$. 

As already noted above, the Stieltjes condition (\ref{ass2}) is weaker than the Carleman condition (\ref{carle}). Comparing the two approaches mentioned in the preceding paragraphs shows another advantage of Theorem \ref{stielt2} over Nussbaum's theorem.  Assumption (\ref{as2}) of Theorem \ref{stielt2} requires growth conditions of the functional $L$ at the boundary curves $x_2=x_1^k$ and $x_1=0$ of the  set $\cK({\sf f})$, while the Carleman assumptions (\ref{as3}) are  growth conditions of $L$ at the coordinate axis.	That is, Theorem \ref{stielt2} contains weaker growth assumptions concerning the powers of $L(x_j^n)$ and it is better adapated to the geometric form of the semi-algebraic set  $\cK({\sf f})$.
\end{exa}								
																	 																							 						
\section{Proofs of Theorems	\ref{stielt1} and \ref{stielt2} }\label{section2}				

Throughout this proof we abbreviate the polynomial algebras $\R[y_1,\dots,y_m]$ by $\R_m[\uy]$,  $\C[y_1,\dots,y_m]$ by $\C_m[\uy]$, and $\R[x_1,\dots,x_d]$ by $\R_d[\ux]$.

As noted in Section \ref{section2}, we can assume without loss of generality that $L(1)=1$.
We extend the $\R$-linear functional $L$ on  $\R_d[\ux]$ to a $\C$-linear functional  $L_\C$ on  $\C_d[\ux]$ by setting $$L_\C(p_1+\ii p_2)=L(p_1)+\ii L(p_2),\quad p_1,p_2\in \R_d[\ux].$$ Note that $\C_d[\ux]$ is a complex  $*$-algebra with involution $(p_1+\ii p_2)^+:=p_1-\ii p_2$ for $ p_1,p_2\in \R_d[\ux]$. Since $(p_1+\ii p_2)^+(p_1+\ii p_2)=p_1^2+p_2^2$, the first condition of assumption (\ref{ass1}) implies that $L_\C(p^+p)\geq 0$ for $p\in\C_d[\ux]$, that is, $L_\C$ is a positive linear functional on the complex unital $*$-algebra $\C_d[\ux]$. Hence the GNS construction 
 applies to this functional $L_\C$.

We briefly recall the corresponding GNS representation (see   e.g.
\cite[Section 3.5]{sch2020} for a detailed exposition). To simplify the notation let us suppress the index $\C$ and write $L$ instead of $L_\C$.
The GNS representation $\pi_L$ of $L$ acts on a complex inner product space $(\cD_L,\langle \cdot, \cdot\rangle) $. Let $\cH_L$ denote the Hilbert space completion of $\cD_L$. The representation $\pi_L$ has an algebraically cyclic vector $\varphi_L$, that is, $\cD_L=\pi_L(\C_d[\ux])\varphi_L$,	and it satisfies 
\begin{align}\label{posLvar}
L(p)=\langle \pi_L(p)\varphi_L,\varphi_L\rangle,~~p\in \C_d[\ux].
\end{align}	
For $p,q\in \C_d[\ux]$ and $\eta, \zeta\in \cD_L$,  we have	$\pi_L(p)\pi_L(q)\eta=\pi_L(pq)\eta$, $\pi_L(1)\eta=\eta$, and
\begin{align*}
\langle \pi_L(p)\eta,\zeta \rangle =\langle \eta,\pi_L(p^+)\zeta\rangle.
\end{align*}																						
In the subsequent proof we will freely use these properties.

Our proof is essentially based on the concept of a Stieltjes vector.
Let $T$ be a symmetric operator on a Hilbert space $\cH$. A vector $\varphi$ of $\cH$ is called a {\it Stieltjes vector} for $T$ (\cite{nussbaum2}, see also \cite[Definition 7.1]{sch2012}) if	$\varphi\in \cD(T^n)$ for all $n\in \N$ and
\begin{align}\label{defstv}
\sum_{n=1}^\infty \|T^n\varphi \|^{-1/2n}=+ \infty.
\end{align}

The crucial step of the proof of Theorem \ref{stielt1} is the next lemma. It shows that assumption (\ref{ass2}) implies that the domain $\cD_L$ contains enough  Stieltjes vectors for the representation operators $\pi_L(x_j)$.
\begin{lem}\label{stielcom} Suppose $S$ is a symmetric  operator on $\cD_L$ which leaves $\cD_L$ invariant and commutes with all operators $\pi_L(\R_d[\ux])$. 
Then $S\varphi_L$ is a Stieltjes vector for $\pi_L(x_j), j=1,\dots,d$. In particular, $\varphi_L$ is a Stieltjes vector for each operator $\pi_L(x_j)$.
\end{lem}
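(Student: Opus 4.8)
The plan is to reduce the Stieltjes-vector property of $S\varphi_L$ for $\pi_L(x_j)$ directly to the growth hypothesis (\ref{ass2}), used in the equivalent form (\ref{stieltjm}) with $m=4$, via a single Cauchy--Schwarz estimate. Fix $j$ and write $\varphi:=S\varphi_L$ and $T:=\pi_L(x_j)$. Since $x_j$ is a real polynomial, $T$ is symmetric on $\cD_L$ and maps $\cD_L$ into itself, so $\varphi\in\cD_L$ lies in $\cD(T^n)$ for every $n$ and $T^n\varphi\in\cD_L$. Because $S$, and hence $S^2$, commutes with $\pi_L(\R_d[\ux])$, in particular with $\pi_L(x_j^n)=T^n$, we have $T^n\varphi=T^nS\varphi_L=S\,\pi_L(x_j^n)\varphi_L$.

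First I would compute $\|T^n\varphi\|^2$. Using that $S$ is symmetric on $\cD_L$ (so $\langle S\eta,S\eta\rangle=\langle\eta,S^2\eta\rangle$ for $\eta\in\cD_L$), then that $S^2$ commutes with $\pi_L(x_j^n)$, and finally the listed GNS identities $\langle\pi_L(p)\eta,\zeta\rangle=\langle\eta,\pi_L(p^+)\zeta\rangle$ and $\pi_L(p)\pi_L(q)=\pi_L(pq)$ on $\cD_L$, one gets
\[
\|T^n\varphi\|^2=\langle S\pi_L(x_j^n)\varphi_L,S\pi_L(x_j^n)\varphi_L\rangle=\langle\pi_L(x_j^n)\varphi_L,\pi_L(x_j^n)S^2\varphi_L\rangle=\langle\pi_L(x_j^{2n})\varphi_L,S^2\varphi_L\rangle .
\]
Applying the Cauchy--Schwarz inequality in $\cH_L$ and using $\|\pi_L(x_j^{2n})\varphi_L\|^2=\langle\pi_L(x_j^{4n})\varphi_L,\varphi_L\rangle=L(x_j^{4n})$, this yields the key bound
\[
\|T^n\varphi\|\le\|S^2\varphi_L\|^{1/2}\,L(x_j^{4n})^{1/4},\qquad n\in\N .
\]

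If $S^2\varphi_L=0$ then $\|S\varphi_L\|^2=\langle S^2\varphi_L,\varphi_L\rangle=0$, so $\varphi=0$ and the Stieltjes series is trivially $+\infty$. Otherwise put $D:=\|S^2\varphi_L\|>0$; since $D^{-1/(4n)}\ge\min\{1,D^{-1/4}\}=:c>0$ for all $n$, the displayed bound gives $\sum_{n}\|T^n\varphi\|^{-1/(2n)}\ge c\sum_{n}L(x_j^{4n})^{-1/(8n)}$. By (\ref{ass1}) (after the normalization $L(1)=1$) the sequence $(L(x_j^n))_{n\in\N_0}$ satisfies the hypotheses of Lemma \ref{techlem}, so assumption (\ref{ass2}) forces (\ref{stieltjm}); taking $m=4$ gives $\sum_{n}L(x_j^{4n})^{-1/(8n)}=+\infty$. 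Hence $\sum_{n}\|T^n\varphi\|^{-1/(2n)}=+\infty$, i.e. $S\varphi_L$ is a Stieltjes vector for $\pi_L(x_j)$ for each $j$, and the final assertion is the special case $S=\mathrm{id}$.

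I do not expect a genuine obstacle. The one substantive step is inserting $S^2$ and peeling it off by Cauchy--Schwarz: this is precisely what converts the ``$S$-twisted'' quantity $\|\pi_L(x_j)^nS\varphi_L\|$ into something governed by $L(x_j^{4n})$, and it explains why the fourth moments (the case $m=4$), rather than the moments themselves, enter. The remaining care is bookkeeping --- verifying that $\cD_L$ is invariant and that $S$ and $S^2$ actually commute with $\pi_L(x_j^n)$ on $\cD_L$ --- which is immediate from the hypotheses on $S$ and the properties of the GNS representation recalled before the lemma.
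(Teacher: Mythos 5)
Your proof is correct and takes essentially the same route as the paper's: both move $S$ onto $\varphi_L$ via symmetry and commutation to produce $\langle \pi_L(x_j^{2n})\varphi_L, S^2\varphi_L\rangle$, apply Cauchy--Schwarz to bound $\|\pi_L(x_j)^nS\varphi_L\|$ by a constant times $L(x_j^{4n})^{1/4}$, and conclude via Lemma \ref{techlem} and (\ref{stieltjm}) with $m=4$. The only difference is cosmetic: you control the $n$-independent constant by $\min\{1,\|S^2\varphi_L\|^{-1/4}\}$ (treating $S^2\varphi_L=0$ separately), whereas the paper uses the bound $(1+\|S^2\varphi_L\|)^{1/(8n)}\leq c$.
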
 
\begin{proof}  Let $j=1,\dots,d$ and $n\in \N$. Using that the assumptions on the symmetric operator $S$ and basic properties of the GNS representation $\pi_L$ we derive 
\begin{align*}
\|\pi_L(x_j)^n S\varphi_L\|^4&=\langle \pi_L(x_j)^n S\varphi_L,\pi_L(x_j)^n S\varphi_L\rangle^2\\ &=\langle \pi_L(x_j)^{2n} S\varphi_L, S\varphi_L\rangle^2=\langle  \pi_L(x_j)^{2n} \varphi_L, S^2\varphi_L\rangle^2\\ &\leq  \|\pi_L(x_j)^{2n} \varphi_L\|^2\, \| S^2\varphi_L\|^2=\langle\pi_L(x_j)^{2n} \varphi_L,\pi_L(x_j)^{2n}\varphi_L\rangle\, \| S^2\varphi_L\|^2\\ 
&=\langle\pi_L(x_j^{4n}) \varphi_L,\varphi_L\rangle\, \| S^2\varphi_L\|^2\leq L(x_j^{4n})\, (1+\| S^2\varphi_L\|)^2
\end{align*}
and hence
\begin{align}\label{in1}
\|\pi_L(y_j)^n\varphi\|^{1/(2n)} \leq L(x_j^{4n})^{1/(8n)}\,(1+\| S^2\varphi_L\|)^{1/(8n)}.
\end{align}
Since $\lim_{n\to \infty} (1+\| S^2\varphi_L\|)^{1/(8n)}=1$, there is a constant $c>0$ such that 
\begin{align}\label{in2}
(1+\| S^2\varphi_L\|)^{1/(8n)}\leq c~~~\textrm{for all}~~ n\in \N.
\end{align} for all $n\in \N$.
Combining (\ref{in1}) and  (\ref{in2}) yields
\begin{align*}
\sum_{n=1}^\infty \|\pi_L(x_j)^n\varphi\|^{-1/(2n)} \geq c^{-1} \sum_{n=1}^\infty L(x_j^{4n})^{-1/(8n)}.
\end{align*}
Since we have assumed that $L(1)=1$, 
the discussion after Lemma \ref{techlem} in Section \ref{section2} applies and shows  assumption (\ref{ass2})  implies (\ref{stieltjm}). Setting $m=4$ in  (\ref{stieltjm}) it follows that the right hand side of the preceding inequality is infinite, so is the left hand side. By (\ref{defstv}) this shows that $S\varphi_L$ is a Stieltjes vector for $\pi_L(x_j)$. Setting $S=I$, we conclude that $\varphi_L$ is a Stieltjes vector $\pi_L(x_j)$. 
\end{proof}
\begin{lem}
The operators $T_j:=\ov{\pi_L(x_j)}$, $j=1,\dots,d$, are pairwise strongly commuting self-adjoint operators. 
\end{lem}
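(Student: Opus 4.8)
The plan is to deduce pairwise strong commutativity of the self-adjoint operators $T_j=\ov{\pi_L(x_j)}$ from the fact, established in Lemma \ref{stielcom}, that each $\pi_L(x_j)$ has a dense set of Stieltjes vectors in $\cD_L$. First I would record that each $\pi_L(x_j)$ is a symmetric operator on $\cD_L$ (by the property $\langle\pi_L(x_j)\eta,\zeta\rangle=\langle\eta,\pi_L(x_j)\zeta\rangle$, since $x_j^+=x_j$) and that it is nonnegative: for $\eta=\pi_L(p)\varphi_L\in\cD_L$ one has $\langle\pi_L(x_j)\eta,\eta\rangle=L(x_jp^+p)=L(x_j|p|^2)\ge 0$ by the second half of assumption (\ref{ass1}). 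Hence $\pi_L(x_j)$ is a nonnegative symmetric operator whose domain contains the dense set of Stieltjes vectors $\{S\varphi_L\}$ (indeed $\varphi_L$ itself works, and more generally $\pi_L(q)\varphi_L$ for any $q$, since $\pi_L(q)$ commutes with $\pi_L(x_j)$ and maps $\cD_L$ into itself—so Lemma \ref{stielcom} with $S=\pi_L(q)$, or a direct repetition of its estimate, shows the whole of $\cD_L$ consists of Stieltjes vectors). By the Nussbaum--Masson--McClary theorem quoted in the text (\cite{nussbaum2}, \cite{masson}, \cite[Theorem 7.15]{sch2012}), a nonnegative symmetric operator with a dense set of Stieltjes vectors is essentially self-adjoint; therefore each $T_j=\ov{\pi_L(x_j)}$ is self-adjoint.

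Next I would obtain strong commutativity. The operators $\pi_L(x_i)$ and $\pi_L(x_j)$ commute on $\cD_L$ (polynomials commute, and $\pi_L$ is an algebra homomorphism on $\cD_L$), and $\cD_L$ is a common core. The key extra input is that $\cD_L$ also consists of Stieltjes vectors, hence in particular of \emph{analytic-type} vectors good enough to force the closures to commute strongly. Concretely, I would invoke the standard commutation criterion: if $A,B$ are symmetric operators with a common invariant dense domain $\cD$ on which $AB=BA$, if $A$ and $B$ are essentially self-adjoint on $\cD$, and if $\cD$ consists of Stieltjes (or quasi-analytic) vectors for $A+B$ — or, more to the point here, one applies the Nussbaum–Masson–McClary machinery to the operator $T=\pi_L(x_i)+\pi_L(x_j)+\mathrm{const}$ and to products to control cross terms — then $\ov A$ and $\ov B$ commute strongly. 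A clean route is: for fixed $i\ne j$, the vectors in $\cD_L$ are Stieltjes vectors for $\pi_L(x_i^2+x_j^2)$ by exactly the computation in Lemma \ref{stielcom} applied with the single variable $x_i^2+x_j^2$ in place of $x_j$ (this again reduces, via Lemma \ref{techlem} and the discussion after it, to condition (\ref{ass2}) for the coordinates $x_i,x_j$), whence $\pi_L(x_i^2+x_j^2)$ is essentially self-adjoint; this is the hypothesis of the commutation theorem for the pair $T_i,T_j$ (cf.\ \cite[Section 10.5 and Section 12.5]{sch2017}), which then yields strong commutativity.

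The main obstacle I anticipate is the second step: passing from commutativity on the core $\cD_L$ to strong commutativity of the self-adjoint closures. For unbounded symmetric operators this implication genuinely fails in general (Nelson's classical counterexample), so the argument must use the Stieltjes/analytic structure of $\cD_L$ in an essential way rather than soft functional analysis. The cleanest packaging is to quote the appropriate version of the commutation theorem — in the spirit of Nelson's analytic-vectors theorem but with Stieltjes vectors, as available through the references already cited (\cite{nussbaum2}, \cite{sch2012}, \cite{sch2017}) — applied either to the sums $\pi_L(x_i)+\pi_L(x_j)$ or to $\pi_L(x_i^2+x_j^2)$, after checking that $\cD_L$ consists of Stieltjes vectors for that single operator. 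Verifying this last point is, however, just a repetition of the estimate proved in Lemma \ref{stielcom} with $x_j$ replaced by the polynomial $x_i^2+x_j^2$ (using $L((x_i^2+x_j^2)^{2n})\le 2^{2n}\bigl(L(x_i^{4n})+L(x_j^{4n})\bigr)$ or a similar bound together with Lemma \ref{techlem}), so no new ideas are needed beyond what is already in the paper.
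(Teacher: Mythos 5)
The first half of your argument (nonnegativity and symmetry of each $\pi_L(x_j)$, a dense set of Stieltjes vectors via Lemma \ref{stielcom}, and the Nussbaum--Masson--McClary theorem giving self-adjointness of $T_j$) is correct and is exactly the paper's first step. The gap is in the commutativity half. Your route stands or falls with the claim that $\cD_L$ consists of Stieltjes vectors for $\pi_L(x_i^2+x_j^2)$, so that this operator is essentially self-adjoint and a Nelson-type commutator theorem applies; you assert that this ``reduces, via Lemma \ref{techlem} and the discussion after it, to condition (\ref{ass2})'' and that ``no new ideas are needed''. That reduction does not work. What the estimate of Lemma \ref{stielcom} requires is divergence of $\sum_n L\bigl((x_i^2+x_j^2)^{n}\bigr)^{-1/(2n)}$, i.e.\ condition (\ref{ass2}) with $x_i^2+x_j^2$ in place of a coordinate. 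Your inequality $L((x_i^2+x_j^2)^{2n})\le 2^{2n}\bigl(L(x_i^{4n})+L(x_j^{4n})\bigr)$ already needs care, since $L$ is only known to be nonnegative on $\sum\R_d[\ux]^2+\sum_j x_j\sum\R_d[\ux]^2$ and not on pointwise nonnegative polynomials (this part is repairable: the difference is a sum of squares because nonnegative binary forms in $u=x_i^2$, $v=x_j^2$ are SOS). But even granting it, it only yields a term-wise lower bound of the form $c\,\min\bigl(L(x_i^{2n})^{-1/(2n)},\,L(x_j^{2n})^{-1/(2n)}\bigr)$, and the sum of the minima of two divergent, nonincreasing positive series can be finite: let one sequence ``pause'' at its current value over a long block while the other drops to a very small value, and alternate; monotonicity, and even the log-convexity enjoyed by moment sequences, do not exclude this. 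Lemma \ref{techlem} compares a single sequence only with its subsequences $s_{nm}$ and cannot combine two variables. So the separate Stieltjes conditions (\ref{ass2}) do not deliver the Stieltjes condition for $x_i^2+x_j^2$ (nor for $x_i+x_j$), and the hypothesis of Nelson's theorem is not obtained. Your fallback, a general principle that commuting nonnegative symmetric operators each having a dense set of Stieltjes vectors have strongly commuting closures, is not available in the cited references; establishing such a statement in this GNS setting is precisely the nontrivial step you are trying to shortcut (as you note, Nelson's counterexample rules out any soft argument).

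For comparison, the paper's proof never needs a joint condition. It applies Lemma \ref{stielcom} with the \emph{symmetric commuting} operator $S=\pi_L(x_k+1)$, concluding that every vector of $\cE_k=(T_k+I)\cD_L$ is a Stieltjes vector for $\pi_L(x_j)$; since $\cE_k$ is dense ($T_k$ being a positive self-adjoint operator with core $\cD_L$), Nussbaum's theorem shows that $\pi_L(x_j)\lceil\cE_k$ is essentially self-adjoint, whence $\cE_{jk}=(T_j+I)(T_k+I)\cD_L$ is dense; on $\cE_{jk}$ the bounded resolvents $(T_j+I)^{-1}$ and $(T_k+I)^{-1}$ commute, hence they commute on all of $\cH_L$, and strong commutativity follows from the resolvent criterion. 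To repair your proposal you would either have to substitute this resolvent argument for the Nelson-commutator step, or add the Stieltjes condition for $x_i^2+x_j^2$ as an additional hypothesis, which would weaken the theorem.
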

\begin{proof}
First we verify that $\pi_L(x_j)\geq 0$. For let $p\in \C_m[\uy]$. Then we can write $p=p_1+\ii p_2$ with $p_1,p_2\in \R_d[\ux]$ and $p^+p=p_1^2+p_2^2$. Therefore,  
 $$\langle \pi_L(x_j)\pi(p)\varphi_L,\pi(p)\varphi_L\rangle = \langle \pi_L(x_j(p_1^2+p_2^2)) \varphi_L,\varphi_L\rangle=L(x_j (p_1^2+p_2^2)\geq 0$$ by assumption (\ref{ass1}). Thus $\pi_L(x_j)\geq 0$. Therefore, since each vector of $\cD_L$ is a Stieltjes vector for 
 $\pi_L(x_j)$ by Lemma \ref{stielcom}, it follows  from Nussbaum's theorem in \cite{nussbaum2} (see also \cite[Theorem 7.15]{sch2012}) 
  that $\pi_L(x_j)$ is essentially self-adjoint. This means that its closure $T_j$ of the operator $\pi_L(x_j)$ is self-adjoint. Since $\pi_L(x_j)\geq 0$, $T_j$ is positive.
 Hence $\cE_j:=(T_j+I)\cD_L=\pi_L(x_j+1)\cD_L$ is dense in $\cH_L$ for $j=1,\dots,x$ by \cite[Proposition 3.15]{sch2012}.

We prove that the self-adjoint operators $T_j$ and $T_k$ strongly commute. 

Clearly, the symmetric operator $\pi_L(x_k+1)$ commutes with all operators of $\pi_L(\R_d[\ux])$. Therefore, Lemma \ref{stielcom} applies with  $S:=\pi_L(x_k+1)$ and shows that all  vectors of $\cE_k=\pi_L(x_k+1)\cD_L$ are Stieltjes vectors for the symmetric operator $\pi_L(x_j)$. Since $\cE_k$ is dense as noted above, $T_j\lceil \cE_k=\pi_L(x_j)\lceil\cE_k$ is essentially self-adjoint, again by \cite[Theorem 7.15]{sch2012}. Hence, by  \cite[Proposition 3.15]{sch2012}, $\cE_{jk}:=(T_j+I)\cE_k=(T_j+I)(T_k+I)\cD_L$ is dense in $\cH_L$. Let $\psi\in \cE_{jk}$. By the definition of $\cE_{jk}$  there is a vector $\varphi\in \cD_L$ such that $\psi=(T_j+I)(T_k+I)\varphi= (T_k+I)(T_j+I)\varphi$. Then $(T_k+I)^{-1}(T_j+I)^{-1}\psi=\varphi= (T_j+I)^{-1}(T_k+I)^{-1}\psi$. This shows that the operators $(T_j+I)^{-1}$ and $(T_k+I)^{-1}$ commute on the dense linear subspace $\cE_{jk}$. Since $T_j$ and $T_k$ are positive self-adjoint operators, these operators are bounded, so they commute everywhere on $\cH_L$. That is, the  resolvent operators $(T_j+I)^{-1}$ of $T_j$ and $(T_k+I)^{-1}$ of $T_k$ commute. Hence the self-adjoint operators $T_j$ and $T_k$ commute strongly by  \cite[Theorem 5.27]{sch2012}. 
\end{proof}

Thus, $\{T_1,\dots,T_d\}$ is a $d$-tuple of strongly commuting positive self-adjoint operators acting on the Hilbert space $\cH_L$. Therefore, by the multidimensional spectral theorem \cite[Theorem 5.23]{sch2012}, there exists a unique spectral measure $E$ on the Borel $\sigma$-algebra of $\R^d$ such that $T_j=\int \lambda_j\, dE(\lambda)$, $j=1,\dots,d$. 

Now we  proceed as in the standard operator approach to the moment problem (see e.g. \cite[p. 305]{sch2017}). We define a Radon measure $\mu$ on $\R^d$ by $\mu(\cdot):=\langle E(\cdot)1,1\rangle$. The vectors of the domain  $\cD_L$, in particular the polynomial $1$, are contained in the domains of all operator polynomials $p(T_1,\dots,T_d)$ for $p\in\C_d[\ux]$ and we have $\pi_L(p)\subseteq p(T_1,\dots,T_d)$.  Therefore, using the functional calculus and equation (\ref{posLvar}) we derive
\begin{align*}
\int p(\lambda)\, d\mu(\lambda)= \int p(\lambda)\, d\langle E(\lambda)1,1\rangle= \langle p(T_1,\dots,T_d)1,1\rangle = \langle \pi_L(p)\varphi_L,\varphi_L\rangle =L(p).
\end{align*}
That is, $L$ is a moment functional. Because the self-adjoint operators $T_j$ are positive, the spectral measure $E$, hence the measure $\mu$, is supported on $\R_+^d$, that is, $L$ is a Stieltjes moment functional.	Since all operators $\pi_L(x_j)$, $j=1,\dots,d,$ are essentially self-adjoint, $\mu$ is determinate by \cite[Theorem 14.2]{sch2017}, that is, $\mu$ is the unique representing measure for $L$. This completes the proof of Theorem \ref{stielt1}.

Now we turn to the proof of Theorem  \ref{stielt2}. We define a unital algebra homomorphism $\theta: \R_m[\uy]\mapsto \R_d[\ux]$ by $\theta(y_j)=f_j, j=1,\dots,m,$  and a linear functional $\tilde{L}$ on $\R_m[\uy]$ by $\tilde{L}(p):= L(\theta(p))$, that is, ${\tilde L}(p(y_1,\dots,y_m))= L(p(f_1,\dots,f_m))$, for $p\in \R_m[\uy]$. The assumptions (\ref{ass3}) and (\ref{ass4}) on  $L$ imply that the functional $\tilde{L}$ satisfies the assumptions (\ref{ass1}) and (\ref{ass2}). Therefore, by Theorem \ref{stielt1}, $\tilde{L}$ is a Stieltjes moment functional on $\R_m[\uy]$, that is, there exists a Radon measure $\nu$ on $\R_+^m$ such that 
\begin{align}\label{nurep}
\tilde{L}(p)=\int_{\R_+^m} p(t)\, d\nu(t)~~~\textrm{for}~~p\in \R_m[\uy].
\end{align}
Let $\tau:\cK({\sf f})\mapsto \R_+^m$ denote the mapping defined by $\tau(\lambda)=(f_1(\lambda),\dots,f_m(\lambda)).$ That $\tau$ maps 
$\cK({\sf f})$ into $\R_+^m$ follows from  the definition (\ref{defkf}) of the semi-algebraic set $\cK({\sf f})$. Let $\mu$ denote the pull-back of the measure $\nu$  with respect to the mapping $\tau$. Using (\ref{nurep}) we derive for $p\in \R_m[\uy]$,
\begin{align*}
L(p(f_1,& \dots,f_m))\equiv L(\theta(p))=\tilde{L}(p)=\int_{\R_+^m} p(y_1,\dots,y_m)\, d\nu(y)\\ &=\int_{\cK({\sf f})} p(\tau(\lambda))\, d\mu(\lambda) = \int_{\cK({\sf f})} p(f_1(\lambda),\dots,f_m(\lambda))\, d\mu(\lambda).
\end{align*}
By assumption the polynomials $f_1,\dots,f_m$ generate the  algebra $\R_d[\ux]$. Therefore, $\theta$ is surjective and the preceding equality implies that $L(q)=\int_{\cK({\sf f})} q(\lambda)\, d\mu(\lambda)$ for all (!) polynomials $q\in \R_d[\ux]$. By construction, the  measure $\mu$ is supported on the semi-algebraic set $\cK({\sf f})$. It has finite moments, because $L(q^2)=\int q(\lambda)^2\ d\mu <+\infty$ for $q\in \R[\ux]$. Thus $L$ is moment functional. The uniqueness of the representing measure $\mu$ of $L$  follows at once from the uniqueness of the representing measure $\nu$ of $\tilde{L}$ by Theorem \ref{stielt1}. Now the proof of Theorem \ref{stielt2} is complete.

\bibliographystyle{amsalpha}

\end{document}